\newtheoremstyle{theoreme}
 {\topsep} % measure of space to leave above the theorem
 {\topsep} % measure of space to leave below the theorem
 {} % name of font to use in the body of the theorem
 {0pt} % measure of space to indent
 {\bfseries} % name of head font
 {\newline} % punctuation between head and body
 {3pt} % space after theorem head
 {} % Manually specify head
\newtheoremstyle{proposition}
{\topsep} % measure of space to leave above the theorem
 {\topsep} % measure of space to leave below the theorem
 {} % name of font to use in the body of the theorem
 {0pt} % measure of space to indent
 {\bfseries} % name of head font
 {\newline} % punctuation between head and body
 {3pt} % space after theorem head
 {} % Manually specify head
 \newtheoremstyle{lemma}
 {\topsep} % measure of space to leave above the theorem
 {\topsep} % measure of space to leave below the theorem
 {} % name of font to use in the body of the theorem
 {0pt} % measure of space to indent
 {\bfseries} % name of head font
 {\newline} % punctuation between head and body
 {3pt} % space after theorem head
 {} % Manually specify head
\newtheoremstyle{definition}
 {\topsep}
 {\topsep}
 {}
 {0pt}
 {\bfseries}
 {\newline}
 {3pt}
 {}
\newtheoremstyle{remarque}
 {\topsep}
 {\topsep}
 {}
 {0pt}
 {\bfseries}
 {\newline}
 {3pt}
 {} 
\theoremstyle{theoreme}
\newtheorem{Thm}{Theorem}[section]
\newtheorem*{Thm*}{Theorem}
\theoremstyle{lemma}
\newtheorem{Lem}[Thm]{Lemma}
\theoremstyle{proposition}
\newtheorem{Prop}[Thm]{Proposition}
\newtheorem*{Prop*}{Proposition}
\theoremstyle{definition}
\theoremstyle{remarque}
\newtheorem{Rem}[Thm]{Remark}
\newenvironment{fonction}{\begin{array}{ccccc}}{\end{array}}
\newcommand\F{\mathbb{F}}
\newcommand\comment[1]{}
\begin{document}
\title{A Baer-Krull theorem for quasi-ordered groups}
\author{Salma Kuhlmann, Gabriel Lehéricy}
\maketitle

 \begin{abstract}
  We give group analogs of two important theorems of real algebra concerning convex valuations, one of which 
  is the Baer-Krull theorem. We do this by using quasi-orders, which gives a uniform approach to valued and ordered groups.
  We also recover the classical Baer-Krull theorem from its group analog.
 \end{abstract}

 \section*{Introduction}

  The theories of field ordering and field valuation present some strong similarities but are classically treated as 
  separate subjects. However, in \cite{Fakhruddin}, Fakhruddin found a way of unifying these two theories by using quasi-orders.
  He defined a quasi-ordered field as a field $K$ endowed with a total quasi-order (q.o) $\precsim$ satisfying the following axioms:\\
  %\begin{itemize}\label{qofields}\\  
    $(Q_1)$ $\forall x(x\sim0\Rightarrow x=0)$\\
    $(Q_2)$ $\forall x,y,z(x\precsim y\nsim z\Rightarrow x+z\precsim y+z)$\\
    $(Q_3)$ $\forall x,y,z (x\precsim y\wedge 0\precsim z)\Rightarrow xz\precsim yz$\\
   %\end{itemize}
 He then showed that valued and ordered fields are particular instances of q.o fields, and even showed that they are the only ones,
 so that the theory of q.o fields is very convenient to unify the theories of valued fields with the theory of ordered fields.
 Two different notions of quasi-ordered groups have been studied in \cite{Lehericy} and \cite{Lehericy2}, both of which can be seen as a generalization of 
 ordered and valued groups. The group analog of Fakhruddin's theory was studied in \cite{Lehericy}.
 
  The notion of compatibility between a valuation and an order
  plays an important role in the theory of valued fields. One can give 
  several characterizations of compatibility of a valuation $v$ with respect to an order. The notion of 
  coarsening of a valuation also plays an important role in valuation theory and is in some sense 
  analogous to the notion of compatibility. In \cite{KuhlmannPoint}, the authors used Fakhruddin's theory to give 
  a uniform approach to the notions of compatible valuations with respect to an order and the notion of coarsening of 
  a valuation. We recall their result:
  \begin{Thm}\label{compforfields}
   Let $(K,\precsim)$ be a quasi-ordered field and $v$ a valuation on $K$. The following are equivalent:\\   
   %\begin{enumerate}
     1)$v$ is compatible with $\precsim$\\
     2)$R_v$ is $\precsim$-convex in $K$\\
     3)$I_v$ is $\precsim$-convex in $K$\\
     4)$I_v<1$\\
     5)$\precsim$ induces a  q.o on the residue field $R_v/I_v$.\\
   %\end{enumerate}
  \end{Thm}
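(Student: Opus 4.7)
The plan is to prove the five conditions equivalent by the cycle $1 \Rightarrow 2 \Rightarrow 3 \Rightarrow 4 \Rightarrow 5 \Rightarrow 1$, relying only on the axioms $(Q_1)$--$(Q_3)$ and on the definition of compatibility of $v$ with $\precsim$ (namely, that $x \precsim y$ with $y \neq 0$ forces $v(x) \geq v(y)$). The virtue of the q.o.\ framework is that no case distinction between orders and valuations is required: a single argument using the three q.o.\ axioms treats both classical situations uniformly.

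For $1 \Rightarrow 2$: if $y \in R_v$ and $x \precsim y$ with $x \neq 0$, compatibility gives $v(x) \geq v(y) \geq 0$, hence $x \in R_v$. For $2 \Rightarrow 3$: given $y \in I_v$ and $x \precsim y$ with $x \neq 0$, the idea is to use $(Q_3)$ to multiply the inequality by $\iy$, reducing to $x\iy \precsim 1$, and then invoke $\precsim$-convexity of $R_v$ to deduce $x\iy \in R_v$, hence $v(x) > 0$. For $3 \Rightarrow 4$: if some $y \in I_v$ satisfied $1 \precsim y$, $\precsim$-convexity of $I_v$ would place $1 \in I_v$, contradicting $1 \notin I_v$.

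For $4 \Rightarrow 5$: assuming $I_v \prec 1$, one first notes that $i \prec r$ whenever $i \in I_v$ and $r \in R_v \setminus I_v$ (by multiplying $i \ir \prec 1$ by $r$ via $(Q_3)$), so that axiom $(Q_2)$ yields $r + i \sim r$; the relation $\bar x \precsim \bar y :\Leftrightarrow x \precsim y$ is therefore well-defined on $R_v / I_v$ and inherits totality and $(Q_2),(Q_3)$, while $(Q_1)$ on the residue field follows from the definition of $I_v$. For $5 \Rightarrow 1$: one argues contrapositively, producing from a failure of compatibility a pair $x \precsim y$ with $v(x) < v(y)$, hence an element $y\ix \in I_v$ with $y\ix \succsim 1$, which breaks $(Q_1)$ on the residue field.

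The step I expect to require the most care is $2 \Rightarrow 3$: in the q.o.\ setting there is no notion of sign, so applying $(Q_3)$ to multiply by $\iy$ requires checking the positivity hypothesis $0 \precsim \iy$ in a way that holds both when $\precsim$ is a genuine order and when it is induced by a valuation. A related subtlety arises in $5 \Rightarrow 1$, where a merely qualitative descent of $\precsim$ to $R_v/I_v$ must be upgraded to the quantitative inequality required by compatibility; I expect this to go through by exhibiting, for any hypothetical counterexample, a concrete element of $I_v$ that is not $\prec 1$ in the induced q.o.
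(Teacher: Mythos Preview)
The paper does not contain its own proof of this theorem: it is quoted in the introduction as a known result from \cite{KuhlmannPoint}, so there is nothing in the present paper to compare your proposal against. Your cycle $1\Rightarrow 2\Rightarrow 3\Rightarrow 4\Rightarrow 5\Rightarrow 1$ is indeed the standard route taken in that reference.

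That said, there is a genuine issue with your starting point. The definition of compatibility you adopt, namely ``$x\precsim y$ with $y\neq 0$ forces $v(x)\geq v(y)$'', is too strong and in fact false in the ordered-field case. For instance, on the Laurent series field $\R((t))$ with the $t$-adic valuation and its natural order (which \emph{is} $v$-compatible in the intended sense), one has $-1\leq t$ yet $v(-1)=0<1=v(t)$. The correct uniform definition---the one the paper itself uses for groups and which specializes correctly to both the ordered and the valuational case---is $0\precsim x\precsim y\Rightarrow v(x)\geq v(y)$. With your overly strong hypothesis, the step $1\Rightarrow 2$ becomes trivial precisely because you have assumed more than what is actually available; with the correct hypothesis, you must additionally handle elements $c$ with $c\precnsim 0$ (in the ordered case) by passing to $-c$, and your sketch does not do this. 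The remaining steps $2\Rightarrow 3\Rightarrow 4\Rightarrow 5\Rightarrow 1$ are broadly along the right lines, though the care you anticipate for $2\Rightarrow 3$ (verifying $0\precsim y^{-1}$ uniformly) is exactly where the same sign issue reappears and will need the same fix.
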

  In the special case where $\precsim$ is an order, Theorem \ref{compforfields} gives conditions for 
  a valuation $v$ to be compatible with $\leq$. If $\precsim$ comes from a valuation $w$ then Theorem 
  \ref{compforfields} gives conditions for $v$ to be a coarsening of $w$.
  
  An important result concerning 
  valuation-compatible orders is the Baer-Krull Theorem. Given a valued field 
  $(K,v)$, the Baer-Krull Theorem describes the set of $v$-compatible orderings of $K$  modulo 
   the set of orderings of the residue field. 
  We recall the Baer-Krull Theorem as it appears in \cite{EngelerPrestel}:
  
  \begin{Thm}[Baer-Krull]\label{baerkrull}
   Let $v:K\to G\cup\{\infty\}$ be a valuation and let $(\pi_i)_{i\in I}$ be a family of elements of $K$ such that 
   $(v(\pi_i)+2G)_{i\in I}$ is an $\mathbb{F}_2$-basis of $G/2G$. Let $\mathcal{O}(K)$ be the set of 
   of field orderings on $K$ with which $v$ is compatible and $\mathcal{O}(Kv)$ the set of orders of the residue field
   $Kv$.
   There is a bijection:   
   $\phi: \mathcal{O}(K)\longleftrightarrow\{-1,1\}^I\times\mathcal{O}(Kv)$ defined as follows: 
   $\phi(\leq)=(\eta,\leq_{Kv})$, where $\eta(i)=1$ if and only if $0\leq \pi_i$ and 
   $\leq_{Kv}$ is the order induced by $\leq$ on the residue field.   
   \end{Thm}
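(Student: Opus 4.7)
The plan is to verify well-definedness, injectivity, and surjectivity of $\phi$. Well-definedness is immediate: given a $v$-compatible ordering $\leq$ on $K$, the signs $\eta(i)=\mathrm{sign}(\pi_i)$ are elements of $\{-1,1\}$, and by part~5 of Theorem~\ref{compforfields} (applied with $\precsim\,=\,\leq$) the ordering $\leq$ descends to an ordering $\leq_{Kv}$ of the residue field.

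The key structural observation, used in both directions, is that every $x\in K^{*}$ admits a decomposition $x = y^{2}\bigl(\prod_{i\in J}\pi_{i}\bigr)u$, where $J\subseteq I$ is finite, $y\in K^{*}$, and $u$ is a $v$-unit; moreover, the subset $J$ is uniquely determined by $x$. Existence follows by choosing $J$ so that $v(x)\equiv\sum_{i\in J}v(\pi_{i})\pmod{2G}$, then a $y\in K^{*}$ with $2v(y) = v(x)-\sum_{i\in J}v(\pi_{i})$, and setting $u := x/(y^{2}\prod_{i\in J}\pi_{i})$; uniqueness of $J$ comes from the fact that the classes $(v(\pi_{i})+2G)$ are $\mathbb{F}_{2}$-linearly independent in $G/2G$. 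Consequently, using that squares are positive and that a $v$-unit $u$ satisfies $u>0$ iff $\overline{u}>_{Kv}0$ (Theorem~\ref{compforfields}(5)), any $v$-compatible order obeys $\mathrm{sign}(x)=\bigl(\prod_{i\in J}\eta(i)\bigr)\cdot\mathrm{sign}_{Kv}(\overline{u})$. This formula expresses $\leq$ entirely in terms of $\phi(\leq)$, which gives injectivity.

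For surjectivity, I would take a pair $(\eta,\leq_{Kv})$ and \emph{define} a subset $P\subseteq K^{*}$ by the same formula: $x\in P$ iff $\bigl(\prod_{i\in J}\eta(i)\bigr)\cdot\overline{u}>_{Kv}0$. Well-definedness of $P$ is straightforward (two admissible choices of $u$ differ by a square, whose residue is $>_{Kv}0$), as are $P\cup -P=K^{*}$, $P\cap -P=\emptyset$, and $P\cdot P\subseteq P$ (the last using $\eta(i)^{2}=1$ so that the $\pi_{i}^{2}$ appearing in a product can be absorbed into the square factor). Compatibility of the resulting order with $v$ will then follow from $I_{v}<1$ (Theorem~\ref{compforfields}(4)), which is evident from the definition applied to $1-\delta$ for $\delta\in I_{v}$, and by construction $\phi$ sends this order to $(\eta,\leq_{Kv})$.

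The main obstacle will be showing $P+P\subseteq P\cup\{0\}$. I would split on the values $v(x)$ and $v(y)$. When $v(x)<v(y)$, one writes $x+y=x(1+y/x)$ with $y/x\in I_{v}$: the residue of $1+y/x$ is $1>_{Kv}0$, so $x+y$ inherits the sign of $x$. When $v(x)=v(y)$, uniqueness forces the two decompositions of $x$ and $y$ to share the same $\varepsilon$, so after factoring out the common $y^{2}\prod\pi_{i}$ one reduces to adding two units $u,u'$ whose residues have the same sign $s:=\prod\eta(i)^{\varepsilon_{i}}$; if $\overline{u+u'}\neq 0$ it still has sign $s$, whereas $\overline{u+u'}=0$ would force $\overline{u}=-\overline{u'}$, contradicting that they share the sign $s$. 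This completes the verification and establishes the bijection.
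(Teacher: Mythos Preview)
Your argument is correct and is essentially the standard textbook proof (as in \cite{EngelerPrestel}): decompose each $x\in K^{*}$ as $y^{2}\bigl(\prod_{i\in J}\pi_i\bigr)u$ with $u$ a unit, read off the sign from $\eta$ and $\overline{u}$, and verify the positive-cone axioms directly. One small point you gloss over in the case $v(x)=v(y)$ is that the square factors $y_1,y_2$ in the two decompositions need not coincide; but since $v(y_1)=v(y_2)$ one may absorb $(y_2/y_1)^2$ into the unit without changing the residue sign, so the reduction goes through.

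The paper, by contrast, does \emph{not} argue directly. It first proves a purely group-theoretic Baer--Krull (Theorem~\ref{Thmorders}): $v$-compatible group orders on $(K,+)$ correspond bijectively to families $(\leq_g)_{g\in G}$ of orders on the additive quotients $K^{g}/K_{g}$. It then isolates, via a q-section $s:G\to K$ and the induced isomorphisms $\phi_g:Kv\to K^{g}/K_{g}$, exactly which such families lift to \emph{field} orders (Proposition~\ref{fieldorders}): precisely those coming from a field order $\leq_0$ on $Kv$ together with a homomorphism $\epsilon:G\to\{-1,1\}$. Finally it identifies $\mathrm{Hom}(G,\{-1,1\})$ with $\{-1,1\}^I$ using the $\mathbb{F}_2$-basis. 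Your decomposition $x=y^2\prod\pi_i\cdot u$ is effectively a choice of q-section packaged elementwise, and your case split on $v(x)$ versus $v(y)$ in the verification of $P+P\subseteq P$ is precisely the additive content that the paper offloads to Theorem~\ref{Thmorders}. Your route is shorter and self-contained; the paper's route separates the additive statement (valid for arbitrary valued groups) from the multiplicative compatibility condition, which is the whole point of the paper.
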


   The notion of valuation also exists for groups:
     a valuation on a
group G
 is a map $v$ from $G$ to a totally ordered set $(\Gamma\cup\{\infty\},\leq)$, where 
$\infty$ denotes the maximum of $\Gamma\cup\{\infty\}$, so that the following is satisfied:
$v(g)=\infty\Leftrightarrow g=0$, $v(g-h)\geq\min(v(g),v(h))$ and $v(g)\leq v(h)\Rightarrow v(x+g-x)\leq v(x+h-x)$
for all $g,h,x\in G$.
 Valued groups have been studied in \cite{PriessCrampe} and \cite{Kuhlmann}; however,
   the notions of compatible valuation on groups and of coarsening of group valuations still haven't been developed. 
   The goal
   of this paper is to introduce these notions using the theories of quasi-ordered groups developed in 
   \cite{Lehericy} and \cite{Lehericy2} and to
   find group analogs of Theorems \ref{compforfields} and 
   \ref{baerkrull}. In Section 1 we give some preliminaries on quasi-orders and prove the lemmas which we will need in the next
   sections.
    In Section 2 we state an analog of Theorem \ref{compforfields} in the framework of the compatible quasi-ordered groups defined 
    in \cite{Lehericy}, but conclude that there is no analog of the Baer-Krull Theorem in that case. In Section 3 we 
    consider the C-quasi-ordered groups defined in \cite{Lehericy2} and prove analogs of theorems \ref{compforfields} and 
    \ref{baerkrull} in that setting (Theorems \ref{theoremforCqo} and \ref{BaerKrullgroups}).  
     In Section 4 we explain how we can recover the classical Baer-Krull Theorem from 
    its group analog.
   %by using quasi-orders generalizing both totally ordered and valued groups. 
   %Our approach will also 
   %turn out to be useful for the study of C-groups.

 \section{Quasi-orders on groups}\label{qogroups}
 
    We recall that a quasi-order is a binary relation which is reflexive and transitive.
For us, a q.o group is just a group endowed with a q.o with no 
    extra assumption. A q.o
    $\precsim$ naturally induces an equivalence
    relation which we denote by $\sim$ and defined as 
    $a\sim b\Leftrightarrow a\precsim b\precsim a$. We denote by $cl(a)$ the class of $a$ for this equivalence relation.
For any $a,b\in A$, the notation $a\precnsim b$ means $a\precsim b\wedge a\nsim b$.
    \textit{Every q.o and every order considered in this paper is total}.
     If $(A,\precsim)$ is a quasi-ordered set and $B\subseteq A$, we say that $B$ is
\textbf{$\precsim$-convex} (in $A$) if for any $a\in A$ and $b,c\in B$, $b\precsim a\precsim c$ implies 
$a\in B$. We say that $B$ is an \textbf{initial segment} (respectively, a \textbf{final segment}) of $A$ if for any 
$a\in A$ and $b\in B$, $a\precsim b$ (respectively, $b\precsim a$) implies $a\in B$.
 If $v:G\to\Gamma\cup\{\infty\}$ is a valuation on a group $G$, then for any $\gamma\in\Gamma$ we
 denote by $G^{\gamma}$ the set $\{g\in G\mid v(g)\geq\gamma\}$ and 
 by $G_{\gamma}$ the set $\{g\in G\mid v(g)>\gamma\}$.
Note that if $v$ is a valuation on $G$, then the relation 
    $g\precsim h\Leftrightarrow v(g)\geq v(h)$ is a total q.o, which we call the q.o induced by $v$.
We say that a q.o 
    is \textbf{valuational} if it is induced by a valuation.
    If $(G,\precsim)$ is a q.o group, we say that $g\in G$ is \textbf{v-type} if 
    $g\sim -g$ and \textbf{o-type} if $g\nsim -g\vee g=0$. The choice of this terminology will be made clear later
    (see Proposition \ref{vtypeotype}).
    We say that \textbf{the q.o $\precsim$ is o-type} (respectively \textbf{v-type}) if every element of $(G,\precsim)$ is 
    o-type (respectively v-type). In order to have a Baer-Krull
    theorem, we need to define notions of q.o induced on a quotient as well as notions of ``lifting'' q.o's from 
    quotients. Given a quasi-order $\precsim$ on $G$ and a subgroup $H$, we say that $\precsim$ induces a q.o on 
 $G/H$ if the following relation $\precsim$ defined on $G/H$ is transitive:
 $g_1+H\precsim g_2+H\Leftrightarrow\exists h_1,h_2\in H g_1+h_1\precsim g_2+h_2$. In that case we call this relation the
 \textbf{q.o induced by $\precsim$ on $G/H$}. Now assume we are given a valuation $v:G\to\Gamma$ and a family
 $(\precsim_{\gamma})_{\gamma\in\Gamma}$ of q.o's, each defined on a quotient $G^{\gamma}/G_{\gamma}$.
 We define \textbf{the lifting of $(\precsim_{\gamma})$ to $G$} as the q.o given by the formula:
 $g\precsim h\Leftrightarrow g+G_{\gamma}\precsim_{\gamma}h+G_{\gamma}$ where 
 $\gamma=\min(v(g),v(h))$. It is the only q.o such that for any $\gamma$, 
 $\precsim$ induces the q.o $\precsim_{\gamma}$ on $G^{\gamma}/G_{\gamma}$. Finally, if $(G,\precsim)$ is a q.o group and $v$ a 
 valuation on $G$, we say that $v$ is \textbf{compatible with $\precsim$}, or that $\precsim$ is a \textbf{$v$-compatible q.o}, 
 if for any $g,h\in G$, $0\precsim g\precsim h$ implies
 $v(g)\geq v(h)$.
 
 %Now that these preliminary definitions have been established, we can start looking for analogs of theorems 
 %\ref{compforfields}
 %and \ref{baerkrull}. We first have to determine what kind of q.o we need to consider. We will consider two different kinds 
 %of q.o's, which have been studied in \cite{Lehericy} and \cite{Lehericy2}. 
 
 Towards an analog of Theorem \ref{compforfields},
 we will need the following lemmas. We fix a group $G$, an arbitrary total q.o $\precsim$ and a valuation 
 $v:G\to\Gamma\cup\{\infty\}$. 
 
 \begin{Lem}
   The following statements are equivalent:
  \begin{enumerate}[(1)]
   \item $\forall\gamma\in\Gamma,G^{\gamma}$ is $\precsim$-convex.
   \item $\forall\gamma\in\Gamma,G_{\gamma}$ is $\precsim$-convex.
  \end{enumerate}
 \end{Lem}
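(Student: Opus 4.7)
The plan is to prove both implications directly by manipulating the defining conditions of $G^\gamma$ and $G_\gamma$, exploiting the fact that the families $(G^\gamma)_{\gamma\in\Gamma}$ and $(G_\gamma)_{\gamma\in\Gamma}$ encode the same information: $G^\delta\subseteq G^\gamma$ iff $\delta\geq\gamma$, and every $G_\gamma$ can be written as a union of the $G^\delta$ with $\delta>\gamma$, while every $G^\gamma$ (when $\gamma$ is not minimal) can be written as an intersection of the $G_\delta$ with $\delta<\gamma$. No separate auxiliary lemma is needed.

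For $(1)\Rightarrow(2)$: fix $\gamma\in\Gamma$ and $b,c\in G_\gamma$ with $b\precsim a\precsim c$. Set $\delta:=\min(v(b),v(c))$; since $v(b),v(c)>\gamma$, we have $\delta>\gamma$, and clearly $b,c\in G^\delta$. By (1), $G^\delta$ is $\precsim$-convex, so $a\in G^\delta$, which yields $v(a)\geq\delta>\gamma$ and therefore $a\in G_\gamma$.

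For $(2)\Rightarrow(1)$: fix $\gamma$ and $b,c\in G^\gamma$ with $b\precsim a\precsim c$. Suppose towards a contradiction that $v(a)<\gamma$, and put $\delta:=v(a)\in\Gamma$. Then $v(b),v(c)\geq\gamma>\delta$, so $b,c\in G_\delta$. By (2), $G_\delta$ is $\precsim$-convex, hence $a\in G_\delta$, i.e.\ $v(a)>\delta=v(a)$, a contradiction. Thus $v(a)\geq\gamma$ and $a\in G^\gamma$.

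No step is expected to be a real obstacle; the argument is essentially a routine verification that the two nested families transfer convexity back and forth. The only mild care is in the $(2)\Rightarrow(1)$ direction, where one must pick the auxiliary index $\delta$ as $v(a)$ itself to land in the open level set $G_\delta$ and produce the numerical contradiction $v(a)>v(a)$.
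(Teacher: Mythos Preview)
Your proof is correct and follows essentially the same approach as the paper's own proof: in both directions you pick the same auxiliary index ($\delta=\min(v(b),v(c))$ for $(1)\Rightarrow(2)$ and $\delta=v(a)$ for $(2)\Rightarrow(1)$) and use convexity of the corresponding level set to conclude. The arguments are virtually identical up to notation.
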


 \begin{proof}
  $(1)\Rightarrow(2)$:let $\gamma\in\Gamma$, $g,h\in G_{\gamma}$ and $f\in G$ with 
  $h\precsim f\precsim g$. Set $\delta:=\min(v(g),v(h))$. We have $\delta>\gamma$ so $G^{\delta}\subseteq G_{\gamma}$. 
  Moreover,
  $g,h\in G^{\delta}$, which by $\precsim$-convexity of $G^{\delta}$ implies $f\in G^{\delta}\subseteq G_{\gamma}$.
  $(2)\Rightarrow (1)$: Assume $g,h\in G^{\gamma}$. If $f\notin G^{\gamma}$ then 
   $v(f)<\gamma\leq v(g),v(h)$ hence $g,h\in G_{v(f)}$, which by assumption means we cannot have 
   $h\precsim f\precsim g$.
 \end{proof}

 \begin{Lem}
  $v$ is compatible with $\precsim$ if and only if $(G_{\gamma})^{\succsim 0}$ is $\precsim$-convex for all 
  $\gamma\in\Gamma$.
 \end{Lem}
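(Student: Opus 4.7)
The plan is to prove both directions directly from the definitions; no auxiliary results are needed.

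For the forward implication, I would assume $v$ is compatible with $\precsim$, fix $\gamma\in\Gamma$, and verify convexity of $(G_{\gamma})^{\succsim 0}$ by taking $a,b$ in this set and $f\in G$ with $a\precsim f\precsim b$. From $0\precsim a\precsim f$ I get $f\succsim 0$ by transitivity, and applying compatibility to $0\precsim f\precsim b$ yields $v(f)\geq v(b)>\gamma$, so $f\in(G_{\gamma})^{\succsim 0}$.

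For the converse, I would argue by contraposition. Suppose $0\precsim g\precsim h$ but $v(g)<v(h)$, and set $\gamma:=v(g)$. Then $h\in G_{\gamma}$ because $v(h)>\gamma$, and trivially $0\in G_{\gamma}$ since $v(0)=\infty$; both are $\succsim 0$, so they lie in $(G_{\gamma})^{\succsim 0}$. The assumed convexity of this set then forces $g\in G_{\gamma}$, i.e.\ $v(g)>\gamma=v(g)$, a contradiction.

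There is no real obstacle here: the one point worth flagging is the choice $\gamma:=v(g)$, which is precisely what makes both endpoints $0$ and $h$ land in the strict level set $G_{\gamma}$, so that convexity produces the strict inequality $v(g)>\gamma$ contradicting $\gamma=v(g)$. The argument would fail if one replaced $G_{\gamma}$ by $G^{\gamma}$, which clarifies why the lemma is phrased in terms of the strict sets.
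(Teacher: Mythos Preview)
Your proposal is correct and follows essentially the same approach as the paper's proof: both directions are argued directly from the definitions, using compatibility to push valuation inequalities in the forward direction and choosing $\gamma=v(g)$ to derive a contradiction in the converse. Your write-up is simply a bit more explicit (spelling out $f\succsim 0$ and the membership $0\in G_{\gamma}$), but there is no substantive difference.
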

\begin{proof}
 If $v$ is compatible with $\precsim$, then for any $0\precsim g\precsim h\in G_{\gamma}$ we have 
 $v(g)\geq v(h)>\gamma$ so $g\in G_{\gamma}$. Conversely, assume each $(G_{\gamma})^{\succsim 0}$ is $\precsim$-convex. If 
 $0\precsim g\precsim h$ and 
 $v(h)>v(g)$, then $h\in G_{v(g)}$ by definition and $g\in G_{v(g)}$ by $\precsim$-convexity of 
$(G_{\gamma})^{\succsim 0}$, which is a contradiction.
Thus, 
 $0\precsim g\precsim h\Rightarrow v(g)\geq v(h)$ and $v$ is compatible.
\end{proof}

\begin{Lem}\label{quotientlemma}
 Let $\precsim$ be a q.o such that:
 
 \[(\ast)\left\{\begin{array}{l}    cl(0)=\{0\}\\
                               g\precsim h\nsim -f\Rightarrow g+f\precsim h+f\text{ for any }g,h,f\in G.
                             \end{array}\right.\]
  and let $H$ be a subgroup of $G$. Then $H$ is $\precsim$-convex if and only if $\precsim$ induces a total q.o on $G/H$ such that
  $cl(0+H)=\{0+H\}$. Moreover, if $H$ is $\precsim$-convex, then for every $g\in G\backslash H$, $g\precsim -g$ if and only if 
  $g+H\precsim -g+H$; in particular, $g$
  is v-type (resp. o-type) if and only if 
  $g+H$ is v-type (resp. o-type).
\end{Lem}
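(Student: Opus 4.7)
The plan is to exploit a trichotomy that $(\ast)$, $cl(0)=\{0\}$, and $\precsim$-convexity of $H$ force on $G$: every $g \in G$ lies in exactly one of three regions---$g \in H$, $g \succnsim h$ for all $h \in H$ (``above''), or $g \precnsim h$ for all $h \in H$ (``below''). Indeed, if $g \notin H$, $\precsim$-convexity forbids a squeeze $h \precsim g \precsim h'$ with $h,h' \in H$, and since $g \sim h$ would yield such a squeeze, we also obtain $g \not\sim h$ for every $h \in H$. This last fact is the key technical tool: it makes the side condition ``$b \nsim -c$'' in $(\ast)$ automatic whenever we shift an element of $G \setminus H$ by an element of $H$.

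For the forward direction of the first equivalence, I establish the trichotomy first, then verify transitivity of the induced relation on $G/H$ by sign analysis. When $g_2 \notin H$, $(\ast)$ aligns the witnesses $g_1+h_1 \precsim g_2+h_2$ and $g_2+h_2' \precsim g_3+h_3$ by shifting the first by $h_2'-h_2$ (side condition holds by the trichotomy); when $g_2 \in H$, the signs of $g_1$ and $g_3$ combined with the trichotomy directly produce a witness. The property $cl(0+H)=\{0+H\}$ follows since $g+H \sim 0+H$ forces the sign of $g$ to be $0$. The reverse direction is immediate: any squeeze $h_1 \precsim g \precsim h_2$ with $h_1,h_2 \in H$ projects to $0+H \precsim g+H \precsim 0+H$, and $cl(0+H)=\{0+H\}$ then gives $g \in H$.

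For the second equivalence, the forward direction is trivial (take $h_1=h_2=0$). For the reverse, I process the witness $g+h_1 \precsim -g+h_2$ with two applications of $(\ast)$ to obtain the two equivalent forms $g+k \precsim -g$ and $g \precsim -g-k$, where $k=h_1-h_2 \in H$ (the side conditions reduce to $-g \nsim 0$, valid by the trichotomy). When $k \succsim 0$, applying $(\ast)$ to $-k \precsim 0$ by adding $-g$ yields $-g-k \precsim -g$, closing the chain $g \precsim -g-k \precsim -g$ and giving $g \precsim -g$.

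The main obstacle is the case $k \precnsim 0$, where the analogous manipulation instead produces $-g \precsim -g-k$, failing to close the chain. I argue by contradiction: assume $-g \precnsim g$. Then $g$ must be o-type, so $(\ast)$ permits adding $-g$ to $g+k \precsim -g$ to obtain $k \precsim -2g$; and the hypothesis gives $0 \precnsim 2g$, hence $-2g \precnsim 0$. Thus $k \precsim -2g \precsim 0$ is sandwiched between two elements of $H$, so $\precsim$-convexity forces $2g \in H$. A short auxiliary argument shows that $g \notin H$ and $2g \in H$ force $g$ to be v-type: in the ``above'' case, $(\ast)$ gives $g \precsim 2g$ while $2g \in H$ gives $g \succnsim 2g$, yielding $g \sim 2g$ and simultaneously $g \not\sim 2g$; in the ``below'' case, $(\ast)$ applied to $g \precnsim 0$ gives $-g \succnsim 0$, whence $-g \succnsim g$, contradicting $-g \precnsim g$. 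This contradicts the o-type assumption on $g$. The final v-type/o-type preservation statement then follows by applying the established equivalence symmetrically to both $g \precsim -g$ and $-g \precsim g$.
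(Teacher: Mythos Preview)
Your treatment of the first equivalence is correct and essentially the same as the paper's, with the trichotomy being a clean organizational device that the paper leaves implicit.

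The second equivalence, however, has a genuine gap in the case $k\precnsim 0$. After assuming $-g\precnsim g$ and deriving $k\precsim -2g$, you write ``the hypothesis gives $0\precnsim 2g$, hence $-2g\precnsim 0$.'' This ``hence'' is not justified: the axiom $(\ast)$ does \emph{not} allow you to pass from $0\precnsim 2g$ to $-2g\precnsim 0$. To negate both sides you would need to add $f=-2g$, and the side condition for that is $2g\nsim -f=2g$, which fails. I was unable to find any other route from your available data to $-2g\precsim 0$ using $(\ast)$ alone, and without it your sandwich $k\precsim -2g\precsim 0$ collapses, so you never obtain $2g\in H$ and the auxiliary argument is never triggered. (Incidentally, under the standing assumption $-g\precnsim g$ one always has $0\precnsim g$, so the ``below'' branch of your auxiliary argument is vacuous anyway; only the ``above'' branch is relevant, and it does require $2g\in H$.)

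The paper avoids this trap by working with your \emph{other} derived relation $g\precsim -g-k$ rather than $g+k\precsim -g$. If $-g-k\sim -g$ then $g\precsim -g$ and we are done; otherwise $-g-k\nsim -g$, which is exactly the side condition needed to add $g$ via $(\ast)$, yielding $2g\precsim -k$. Combining with $0\precsim g\precsim 2g$ (which you already have) gives the chain $0\precsim g\precsim 2g\precsim -k$ with $0,-k\in H$, so convexity forces $g\in H$ directly. No case split on the sign of $k$ is needed, and the problematic negation step never arises. Your argument can be repaired by switching to this route.
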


\begin{proof}
 Assume $H$ is $\precsim$-convex. Let $f,g,h$ with $f+H\precsim g+H$ and $g+H\precsim h+H$. There are 
 $a,b,b',c\in H$ such that $f+a\precsim g+b$ and $g+b'\precsim h+c$.
 Assume first that $g,h\in H$. If $h\precsim f+a$ then by $\precsim$-convexity of 
 $H$ we have $f\in H$ and $f\precsim h+f-h$. Assume $h\notin H$, so $h+c\notin H$. By $\precsim$-convexity, 
 $h+c\nsim b-b'$, so by assumption $g+b'\precsim h+c$ implies 
 $f+a\precsim g+b=g+b'+b-b'\precsim h+c+b-b'$. If $g\notin H$ then the same reasoning gives 
 $f+a+b'-b\precsim g+b'\precsim h+c$. In any case, we have $f+H\precsim h+H$, so the relation $\precsim$ on $G/H$ is 
 transitive. This proves that $\precsim$ induces a q.o on $G/H$. If 
 $g+H\sim H$ then there are $a,b,c\in H$ with $a\precsim g+b\precsim c$ which implies $g\in H$, so $cl(H)=\{H\}$.
 Conversely, assume $\precsim$ induces a q.o on $G/H$ such that $cl(0+H)=\{0+H\}$.
 If $g,h\in H$ and $f\in G$ are such that
 $g\precsim f\precsim h$ then $0=g+H\precsim f+H\precsim h+H=0$ hence by assumption $f\in H$.
 
 For the second statement, let $g\in G\backslash H$. 
Assume $g\nprecsim -g$.  Since $\precsim$ is total, we have $-g\precsim g$. Note that 
$0\precsim g$: for if $0\nprecsim g$,  then since $\precsim$ is total we have 
$g\precsim 0\nsim g=-(-g)$ and $(\ast)$ implies 
$0\precsim -g$; so since $-g\precsim g$ we must have $0\precsim g$. Because $g\nprecsim -g$, we have in particular 
$g\nsim -g$, and so by $(\ast)$, \[g=0+g\precsim g+g=2g.\] Thus 
$0\precsim g\precsim 2g$. If $g+H\precsim -g+H$, then there are $a,b\in H$ with 
$g+a\precsim (-g)+b$. Since $H$ is $\precsim$-convex, and $a\in H$, and $-g+b\notin H$, we have 
$-(-a)=a\nsim -g+b$, so by $(\ast)$:
\[g\precsim -g+b-a.\] If $-g\sim -g+b-a$, then $g\precsim -g+b-a\precsim -g$, so we must have  $-g\nsim -g+b-a$, and by 
$(\ast)$ again 
\[2g\precsim b-a.\] Thus $0\precsim g\precsim 2g\precsim b-a$, and $g\in H$ because 
$H$ is $\precsim$-convex. This contradiction shows that $g+H\nprecsim -g+H$ if $g\nprecsim -g$.
\end{proof}
    
 \begin{Rem}
  The reader should pay attention to the fact that the second condition of $(\ast)$ is not equivalent to  axiom 
$(Q_2)$ of compatible q.o's given below: example 3.1(a) of \cite{Lehericy2} satisfies $(\ast)$ but not $(Q_2)$. However, it will later turn out that $(Q_1)$ and $(Q_2)$ 
imply $(\ast)$. The reason why we choose to state Lemma \ref{quotientlemma} with $(\ast)$ instead 
of $(Q_1)$ and $(Q_2)$ is the following: the class of q.o's satisfying $(\ast)$ contains both the class of compatible q.o's (see Section 2) and the class of C-q.o's 
(see Section 3), so that Lemma \ref{quotientlemma} can be applied to both cases.
 \end{Rem}
   
 \section{Compatible q.o's}
 
 A first idea to achieve the goal stated in the introduction 
 is to take inspiration from \cite{Fakhruddin}. We say that a q.o $\precsim$ is compatible 
 (with the group operation) if it satisfies:
 \begin{itemize}
    \item[$(Q_1)$] $cl(0)=\{0\}$
    \item[$(Q_2)$] $\forall x,y,z(x\precsim y\nsim z\Rightarrow x+z\precsim y+z)$ 
    \end{itemize}
    
 Group orders and valuational q.o's are particular cases of compatible q.o's. More precisely, we proved the
 following in \cite{Lehericy} (Proposition 2.8 and 2.15):
 \begin{Prop}\label{vtypeotype}
  Let $\precsim$ be a compatible q.o on an abelian group $G$. Then $\precsim$ is a group order if and only if every element is o-type, and
  $\precsim$ is valuational if and only if every element is v-type.
 \end{Prop}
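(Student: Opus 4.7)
The statement contains two equivalences, with easy and substantive directions each. For the easy directions: if $\precsim$ is a group order then antisymmetry forces $g=-g$ whenever $g\sim -g$, and a short $(Q_2)$-calculation combined with $(Q_1)$ then forces $g=0$, so nonzero elements are o-type; if $\precsim$ is induced by $v$, the standard identity $v(-g)=v(g)$ gives $g\sim -g$, so every element is v-type. The work lies in the two converses.

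For the o-type $\Rightarrow$ order direction, I would first show that $\sim$ is trivial: given $g\sim h$ with $h\neq 0$, o-type of $h$ forces $h\nsim -h$, and since $\sim$ is an equivalence relation also $g\nsim -h$, so $(Q_2)$ applied to both $g\precsim h$ and $h\precsim g$ with $z=-h$ yields $g-h\sim 0$, whence $g=h$ by $(Q_1)$. I would then set $P:=\{g\in G\mid 0\precsim g\}$ and verify that $P$ is a positive cone: totality gives $P\cup(-P)=G$; a parallel $(Q_2)$-argument exploiting o-type gives $P\cap(-P)=\{0\}$; closure $P+P\subseteq P$ splits into the subcase $g\nsim h$ (direct $(Q_2)$ plus transitivity) and $g=h\neq 0$ (o-type rules out $2g=0$, so $(Q_2)$ on $-g\precsim 0$ with $z=2g$ delivers $g\precsim 2g$ and hence $0\precsim 2g$). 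This makes $P$ a positive cone defining a group order $g\leq_P h\iff h-g\in P$, and a final $(Q_2)$-argument (with $z=-g$) identifies $\leq_P$ with $\precsim$.

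For the v-type $\Rightarrow$ valuational direction, I would first prove that $0$ is the minimum of $\precsim$: if $g\precnsim 0$ then v-type gives $-g\sim g\nsim 0$, and $(Q_2)$ on $g\precsim 0$ with $z=-g$ delivers $0\precsim -g$; but $g\precsim 0$ transfers along $\sim$ to $-g\precsim 0$, so $-g\sim 0$ and hence $g=0$ by $(Q_1)$, contradicting $g\nsim 0$. Next, set $\Gamma:=(G\setminus\{0\})/{\sim}$ with the order reversed from the one induced by $\precsim$, and define $v(g):=cl(g)$ for $g\neq 0$, $v(0):=\infty$; $(Q_1)$ gives $v(g)=\infty\iff g=0$. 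The content is the ultrametric inequality: assuming $g\precsim h$, prove $g-h\precsim h$. If $g\precnsim h$, apply $(Q_2)$ with $z=-g$ (legitimate because $h\sim -g$ would give $h\sim g$) to obtain $0\precsim h-g$, then convert to $0\precsim g-h$ by v-type; $(Q_2)$ applied to $0\precsim g-h$ with $z=h$ forces $g-h\sim h$, for otherwise we would obtain $h\precsim g$, contradicting $g\nsim h$. If $g\sim h$ (the subcase $h=0$ collapses via $(Q_1)$), assume for contradiction $h\precnsim g-h$: apply $(Q_2)$ to $h\precsim g-h$ with $z=-g$ — legitimate since $g-h\sim -g$ would, via v-type and $g\sim h$, imply $g-h\sim h$, contradicting the assumption — to obtain $h-g\precsim -h$, which v-type converts to $g-h\precsim h$, thereby contradicting the standing $h\precnsim g-h$.

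The main obstacle I anticipate is the ultrametric inequality in the equivalence case $g\sim h$, because in the v-type regime nearly every natural $(Q_2)$-substitution is blocked by an equivalence ($h\sim -h$, $g\sim -g$, and potentially $g\sim\pm h$). The crucial trick is the indirect application of $(Q_2)$ to $h\precsim g-h$ with $z=-g$: direct substitutions like $z=-h$ or $z=h$ fail outright, but $z=-g$ slips through precisely because the required distinctness $g-h\nsim -g$ reduces via v-type to $g-h\nsim h$, which is exactly the contradiction hypothesis.
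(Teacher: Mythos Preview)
Your argument is correct. The paper itself does not give a proof of this proposition; it merely cites Propositions 2.8 and 2.15 of \cite{Lehericy}. So there is no in-paper proof to compare against, and your self-contained derivation from $(Q_1)$ and $(Q_2)$ is a genuine addition. A couple of minor remarks: in the o-type direction, once you have established antisymmetry the relation $\nsim$ collapses to $\neq$, so several of your later $(Q_2)$-applications (in $P+P\subseteq P$ and in identifying $\leq_P$ with $\precsim$) have small edge cases---e.g.\ $h=-g$ or $h=2g$---that your sketch elides but which are easily handled by one extra $(Q_2)$-step exactly along the lines you already use for the $g=h$ subcase. In the v-type direction your treatment of the ultrametric inequality in the case $g\sim h$ is the delicate point, and your choice of $z=-g$ is indeed the right move; you should also remark that the $v$ you construct really does induce $\precsim$ back (this is immediate from the definition of $\Gamma$ with the reversed order, but worth one sentence).
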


 Compatible q.o's have been studied in \cite{Lehericy}. We proved there (Theorem 2.21) that if $\precsim$ is a compatible 
 q.o on an abelian group $G$, then there is a subgroup 
 $G^o$ of $G$ which is an initial segment of $G$ such that $\precsim$ is a group order on $G^o$ and $\precsim$ corresponds to a valuation outside of 
 $G^o$. It is easy to show that compatible q.o's satisfy the condition $(\ast)$ of lemma \ref{quotientlemma} and that 
 a subgroup $H$ of a compatible q.o group $(G,\precsim)$ is $\precsim$-convex if and only if $H^{\succsim 0}$ is $\precsim$-convex. Using the lemmas 
 from section \ref{qogroups},
 we get:

 \begin{Thm}
 Let $\precsim$ be a compatible q.o on an abelian group $G$ and $v:G\to\Gamma\cup\{\infty\}$ a valuation. 
 The following statements are equivalent:
  \begin{enumerate}
   \item $\forall\gamma\in\Gamma,G^{\gamma}$ is $\precsim$-convex.
   \item $\forall\gamma\in\Gamma,G_{\gamma}$ is $\precsim$-convex.
   \item $\forall\gamma\in\Gamma$, $\precsim$ induces a compatible q.o $\precsim_{\gamma}$
   on $G^{\gamma}/G_{\gamma}$.
   \item $v$ is compatible with $\precsim$.
  \end{enumerate}
  Moreover, $\precsim$ is valuational (respectively, an order) if and only if each $\precsim_{\gamma}$ is valuational
  (respectively, an order).
\end{Thm}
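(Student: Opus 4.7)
The plan is to assemble the theorem from the three lemmas of Section~\ref{qogroups} together with the two observations stated just before the theorem (that compatible q.o's satisfy $(\ast)$, and that a subgroup $H$ of a compatible q.o group is $\precsim$-convex iff $H^{\succsim 0}$ is). The equivalence $(1)\Leftrightarrow(2)$ is the first lemma applied verbatim. For $(2)\Leftrightarrow(4)$, I would combine the second lemma, which characterizes compatibility of $v$ in terms of $\precsim$-convexity of each $(G_\gamma)^{\succsim 0}$, with the observation that $\precsim$-convexity of $G_\gamma$ is equivalent to $\precsim$-convexity of $(G_\gamma)^{\succsim 0}$.

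For $(2)\Rightarrow(3)$: since $G^\gamma$ is a subgroup and $\precsim$ inherits compatibility on it, its restriction satisfies $(\ast)$, and $G_\gamma\subseteq G^\gamma$ is $\precsim$-convex in $G^\gamma$ by hypothesis. Applying Lemma~\ref{quotientlemma} to the pair $(G^\gamma,G_\gamma)$ produces a well-defined q.o $\precsim_\gamma$ on $G^\gamma/G_\gamma$ satisfying $(Q_1)$. To upgrade this to a compatible q.o I would verify $(Q_2)$ directly: given $x+G_\gamma\precsim_\gamma y+G_\gamma$ and $y+G_\gamma\nsim_\gamma z+G_\gamma$, pick witnesses $a,b\in G_\gamma$ with $x+a\precsim y+b$, then use totality applied to the hypothesis $y+G_\gamma\nsim_\gamma z+G_\gamma$ to deduce (after a WLOG case split) that $y+b\nsim z$ in $G$; then $(Q_2)$ for $\precsim$ on $G$ yields $x+z+a\precsim y+z+b$, which translates back to $x+z+G_\gamma\precsim_\gamma y+z+G_\gamma$. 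Conversely, $(3)\Rightarrow(4)$ is a short direct check: given $0\precsim g\precsim h$, set $\gamma:=\min(v(g),v(h))$ and project to $G^\gamma/G_\gamma$; if $v(h)>\gamma$ then $h+G_\gamma=0$, so $(Q_1)$ for $\precsim_\gamma$ forces $g+G_\gamma=0$, yielding $v(g)>\gamma=v(h)$, while otherwise $v(h)=\gamma\leq v(g)$.

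For the ``Moreover'' clause I would apply Proposition~\ref{vtypeotype} in both directions and transfer the o-type/v-type condition element by element via the second statement of Lemma~\ref{quotientlemma}: every nonzero $g\in G$ lies in $G^{v(g)}\setminus G_{v(g)}$, so $g$ is o-type (resp. v-type) in $G$ iff $g+G_{v(g)}$ is o-type (resp. v-type) in $G^{v(g)}/G_{v(g)}$, which gives the equivalences for ``order'' and ``valuational'' after quantifying over all $\gamma$.

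The main obstacle in this plan is the verification of $(Q_2)$ at step $(2)\Rightarrow(3)$: Lemma~\ref{quotientlemma} provides transitivity and $(Q_1)$ for the induced q.o on the quotient but does not immediately deliver $(Q_2)$, so a short but careful argument using totality and a case split on how $\nsim_\gamma$ fails is needed. Everything else amounts to packaging the lemmas already proved.
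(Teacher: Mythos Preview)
Your proposal is correct and follows essentially the same approach as the paper: assemble the equivalences from the three lemmas of Section~\ref{qogroups} together with the two stated observations, and deduce the ``Moreover'' clause from Proposition~\ref{vtypeotype} and Lemma~\ref{quotientlemma}; the paper merely delegates the $(Q_2)$ verification on the quotient to an external reference, whereas you spell it out. One minor slip: in your $(3)\Rightarrow(4)$ step, ``$v(g)>\gamma=v(h)$'' should read ``$v(g)>\gamma=v(g)$'', i.e.\ the case $v(h)>\gamma$ is impossible, leaving $v(h)=\gamma\leq v(g)$ as desired.
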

\begin{proof}
  All that remains to check is that the induced q.o on a quotient satisfies axiom $(Q_2)$; for this we refer to the end of the proof of 2.11 in  
  \cite{Lehericy}. The last part of the theorem is a consequence of Proposition \ref{vtypeotype} and Lemma 
  \ref{quotientlemma}.
\end{proof}

This deals with our first problem, i.e finding an analog of theorem \ref{compforfields} for groups. However, the class
of compatible q.o's is not appropriate for a Bear-Krull theorem.  This is due to the fact that the set of 
 o-type elements of a compatible q.o must be an initial segment of $G$. If we lift an arbitrary family 
 of compatible q.o's to $G$, then the set of o-type elements of $G$ won't be an initial segment in general, which means that 
 the class of compatible q.o's is not stable under lifting. 
 In order to state an analog of the Baer-Krull theorem, we need to
  consider another class of q.o's.

\section{C-q.o's and the Baer-Krull theorem}

We recall that a C-relation is a ternary relation satisfying the universal closure of the following axioms: 
$C(x,y,z)\Rightarrow C(x,z,y)\wedge\neg C(y,x,z)\wedge (C(w,y,z)\vee C(x,w,z))$ and $x\neq y\Rightarrow C(x,y,y)$.
A C-relation on a group $(G,+)$ is said to be \textbf{compatible} (with the group operation) if 
$C(f,g,h)\Rightarrow C(x+f+y,x+g+y,x+h+y)$ holds for every $f,g,h,x,y\in G$. See \cite{Delon} for more 
information on C-relations.
A C-q.o on a group $G$ is a q.o induced by a compatible C-relation, i.e $\precsim$ is a C-q.o if and only if there exists 
a compatible C-relation $C$ on $G$ such that 
$g\precsim h\Leftrightarrow\neg C(g,h,0)$ for every $g,h\in G$.  A structure theorem for groups endowed with a C-q.o was 
given in \cite{Lehericy2} (Theorem 3.41).
 The original purpose of C-q.o's is to study C-groups. This is possible thanks to a bijective correspondence between
 the class of compatible C-relations on a group and the class of C-q.o's. In particular, C-q.o's can be seen as a 
 generalization of both 
 orders and valuations on groups.
 
	    \begin{Prop}
                      Valuational q.o's are C-q.o's. Moreover, a C-q.o
                       $\precsim$ is valuational if and only if
                      every element of $(G,\precsim)$ is v-type. 
                      \end{Prop}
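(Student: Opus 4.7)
The plan is to split the two assertions and handle them using the correspondence between compatible C-relations and C-q.o's together with the structure theorem of \cite{Lehericy2}.

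For the first assertion, I would exhibit an explicit compatible C-relation whose induced quasi-order is the valuational one. Given a valuation $v:G\to\Gamma\cup\{\infty\}$, set $C(x,y,z)\Leftrightarrow v(y-z)>v(x-y)$. First I would check the four C-relation axioms. Symmetry in the last two arguments and the exclusion $\neg C(y,x,z)$ follow from ultrametric calculations: if $v(y-z)>v(x-y)$, then $v(x-z)=v(x-y)$ by the ultrametric equality, which gives both symmetry in $(y,z)$ and rules out $v(x-z)>v(y-x)$. The substitution axiom $C(w,y,z)\vee C(x,w,z)$ is a straightforward case split on $v(x-w)$ vs.~$v(x-y)$. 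The degenerate axiom $x\neq y\Rightarrow C(x,y,y)$ is obvious since $v(0)=\infty$. Compatibility with the group operation reduces to showing $v((x+g+y)-(x+h+y))=v(g-h)$, which is immediate from the valuation axiom $v(g)\leq v(h)\Rightarrow v(x+g-x)\leq v(x+h-x)$ applied symmetrically. Finally I would compute $\neg C(g,h,0)\Leftrightarrow v(h)\leq v(g-h)$, and use the ultrametric inequality to check $v(h)\leq v(g-h)\Leftrightarrow v(g)\geq v(h)$, so that the induced q.o is precisely $\precsim_v$.

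For the second assertion, the forward implication is immediate: in a valuational q.o one has $v(g)=v(-g)$ for every $g$, hence $g\sim -g$, so every element is v-type. For the converse, the decisive tool is Theorem 3.41 of \cite{Lehericy2}, the structure theorem for C-q.o groups, which canonically presents any C-q.o as an assembly of an ``order part'' and a ``valuational part.'' In that decomposition the o-type elements correspond exactly to the order part, because a nonzero element $g$ in an ordered region satisfies $g\nsim -g$, while every element sitting in the valuational region is automatically v-type. Under the hypothesis that every element of $(G,\precsim)$ is v-type, the order part must therefore reduce to $\{0\}$, which leaves $\precsim$ purely valuational.

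The verification of the C-relation axioms in step one is routine once the ultrametric equality is in hand, so the main point where care is required is the converse of the second assertion: one needs to read off from the statement of Theorem 3.41 of \cite{Lehericy2} exactly which elements are v-type in the canonical decomposition, in order to legitimately conclude that the absence of o-type elements forces the q.o to be valuational. The rest is bookkeeping.
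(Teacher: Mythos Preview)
The paper does not supply a proof for this proposition; it is stated as a known fact imported from \cite{Lehericy2}. So there is no in-paper argument to compare against, and your proposal has to be assessed on its own merits.

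Your treatment of the first assertion is correct and is the standard way to see that a valuation yields a compatible C-relation. One minor imprecision: you say compatibility ``reduces to showing $v((x+g+y)-(x+h+y))=v(g-h)$''. In a non-abelian group one computes $(x+g+y)-(x+h+y)=x+(g-h)-x$, and the valuation axiom only guarantees that conjugation preserves the \emph{order} on values, not necessarily the values themselves. That weaker fact, however, is exactly what is needed to preserve the strict inequality defining $C$, so the argument still goes through. The forward direction of the second assertion is fine.

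For the converse of the second assertion there is a genuine concern. You invoke Theorem~3.41 of \cite{Lehericy2}, the structure theorem, to conclude that the absence of o-type elements forces the q.o to be valuational. But in the logical development of \cite{Lehericy2} the present proposition sits well before Theorem~3.41, and the characterisation ``valuational $\Leftrightarrow$ every element v-type'' is precisely the kind of basic fact such a structure theorem would rely on. Unless you have checked that the proof of 3.41 nowhere uses this equivalence, your argument risks being circular. A safer and more elementary route is to build the valuation directly: set $\Gamma:=(G\setminus\{0\})/{\sim}$, ordered by reversing $\precsim$, and define $v(g):=cl(g)$. The ultrametric inequality $g+h\precsim\max_{\precsim}(g,h)$ then follows from the hypothesis $g\sim -g$ together with Lemma~3.6(i) of \cite{Lehericy2} (if $h\precnsim -g$ then $g\sim g+h$), and the conjugation axiom comes from compatibility of the underlying C-relation. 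This keeps the argument self-contained and avoids dependence on the much heavier Theorem~3.41.
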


  In \cite{Fuchs}, the author showed that group orders are characterized by their positive cones: if we are given 
  a subset $P$ of $G$ such that $P\cap -P=\{0\}, P\cup -P=G$ and $P+P\subseteq P$,  then $P$ is
   the set of positive elements of the group order defined by $g\leq h\Leftrightarrow h-g\in P$.                 
 An order is not a C-q.o, but there is a natural connection between orders and o-type C-q.o's. More precisely, we have 
 the following:

 \begin{Prop}\label{mapomega}
  Assume $(G,\precsim)$ is an o-type C-q.o. Then $P:=\{g\in G\mid -g\precsim g\}$ is a positive cone and it is 
  stable under ``$\sim$''.
 \end{Prop}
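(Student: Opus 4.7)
The plan is to verify the three positive-cone conditions $P \cup -P = G$, $P \cap -P = \{0\}$, $P + P \subseteq P$, together with closure under $\sim$, in order of increasing difficulty. For $P \cup -P = G$, totality of $\precsim$ gives either $-g \precsim g$ (so $g \in P$) or $g \precsim -g$ (so $-(-g) = g \precsim -g$, i.e., $-g \in P$ and $g \in -P$). For $P \cap -P = \{0\}$, if $g \in P \cap -P$ then both $-g \precsim g$ and $g \precsim -g$ hold, so $g \sim -g$, and the o-type hypothesis forces $g = 0$. Neither of these steps uses that $\precsim$ is a C-q.o.

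For closure under $\sim$, I would first observe that, under the o-type hypothesis, for nonzero $g$ one has $g \nsim -g$, so $-g \precsim g$ is automatically strict, and by totality exactly one of $g, -g$ lies in $P$. Given $g \sim h$ with $g \in P$ nonzero (the case $g = 0$ forces $h = 0$ by the first clause of $(\ast)$, which C-q.o's satisfy as noted in the remark following Lemma \ref{quotientlemma}), I plan to apply $(\ast)$ to establish the auxiliary fact that negation reverses $\precsim$ on nonzero elements; in particular $-g \sim -h$, and then the chain $-h \precsim -g \precsim g \precsim h$ yields $h \in P$.

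The main obstacle is $P + P \subseteq P$. For $g, h \in P$ both nonzero (otherwise trivial), my principal approach is to work with $(\ast)$ directly. Assuming without loss of generality $g \precsim h$, one has $-g \precsim g \precsim h$, and the aim is to apply $(\ast)$ successively to produce the chain $-(g+h) \precsim g+h$. The $\nsim$ side conditions of $(\ast)$ should be verifiable because the o-type hypothesis keeps every nonzero element and its negative in distinct equivalence classes, and because we already know $g+h \neq 0$ (otherwise $g = -h$ would put both $g$ and $-g$ in $P$, contradicting $P \cap -P = \{0\}$). If the direct route gets stuck, a fallback is to translate into the underlying compatible C-relation via $g \precsim h \Leftrightarrow \neg C(g,h,0)$ and exploit translation invariance together with the C-relation axioms to derive $\neg C(-g-h, g+h, 0)$; a third option is to invoke the structure theorem (Theorem~3.41 of \cite{Lehericy2}) for C-q.o's to reduce to an order-like situation. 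The real technical difficulty lies in tracking the $\nsim$ side conditions across several applications of $(\ast)$, and it is here that the o-type assumption plays its essential role.
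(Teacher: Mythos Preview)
Your treatment of $P\cup -P=G$ and $P\cap -P=\{0\}$ is fine, and your overall architecture (prove closure under $\sim$ first, then use it for $P+P\subseteq P$) matches the paper's. The gap is in believing that $(\ast)$ alone will carry the two harder steps. The $\nsim$ side conditions that actually arise are \emph{not} always of the form ``an element versus its negative''. Concretely, take $g,h\in P$ nonzero with $g\sim h$. To reach $-(g+h)$ by successive translations you must at some point add $-h$ (or $-g$) to an inequality whose right-hand side is $g$ (or $h$); the side condition for $(\ast)$ then reads $g\nsim h$, which fails. The same obstruction blocks your derivation of ``negation reverses $\precsim$'' from $(\ast)$: starting from $a\sim b$ you cannot add $-a$ or $-b$ to either inequality $a\precsim b$, $b\precsim a$, because the side condition is $a\nsim b$. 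So the o-type hypothesis does not rescue the side conditions in the way you expect.

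What the paper does instead is invoke two C-q.o.\ facts from \cite{Lehericy2} that go genuinely beyond $(\ast)$: Lemma~3.15 there gives closure of $P$ under $\sim$ directly, and Lemma~3.6(i) gives the ultrametric-type statement
\[
h\precnsim -g \;\Longrightarrow\; g+h\sim g.
\]
With this in hand the paper shows $-P$ (equivalently $P$) is closed under addition in one line: for nonzero $g,h\in -P$, say $g\precsim h$, one has $g\precsim h\precnsim -h$, hence $g\precnsim -h$, hence $g+h\sim h\in -P$, and closure under $\sim$ finishes. Your C-relation fallback would in effect reprove Lemma~3.6(i); that is the missing ingredient, and you should plan on using it (or citing it) rather than hoping $(\ast)$ suffices.
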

 \begin{proof}
  We use the following result which was proved in \cite{Lehericy2} (Lemma 3.6.(i)): For any $g,h\in G$, $h\precnsim -g$ implies 
  $g\sim g+h$.
  Lemma 3.15 of \cite{Lehericy2} implies that $P$ is stable under ``$\sim$''.
  We clearly have $P\cap -P=\{0\}$ and $P\cup -P=G$. 
  We just have to show $P+P\subseteq P$. We actually show that $-P$ is closed under addition.
  Let $g,h\in -P$. If $h$ or $g$ is $0$ then obviously $g+h\in -P$, so we may assume that $g$ and $h$ are both
non-zero.  If $g\precsim h$ then $g\precnsim -h$ which implies
  $g+h\sim h$ so $g+h\in -P$. Otherwise we have $h\precnsim -g$ which means $g+h\sim g$ so $g+h\in -P$.
 \end{proof}

 Proposition \ref{mapomega} gives us a map 
 $\Omega:\{\text{o-type C-q.o's on $G$}\}\rightarrow\{\text{group orders on $G$}\}$, where $\Omega(\precsim)$ is the order 
 corresponding to the
 positive cone $\{g\in G\mid -g\precsim g\}$. This map is surjective: let $\leq$ be an order of $G$ with positive cone 
 $P$. Then we can define a q.o on $G$ as follows: declare that 
$0\precnsim g\precnsim h$ holds for every $g\in -P\backslash\{0\}$ and $h\in P\backslash\{0\}$; declare then that 
$g\sim h$ whenever $g,h\in -P\backslash\{0\}$ and that $g\precsim h\Leftrightarrow g\leq h$ whenever $g,h$ are both in $P\backslash\{0\}$.
 One can easily check that this is a C-q.o and that it is o-type. By construction it 
 obviously is a pre-image of $\leq$ under $\Omega$. 
   The fact that $\Omega$ is surjective allows us to see orders as special case of C-q.o's. Through $\Omega$ we will be able 
   to transform certain statements concerning C-q.o's into statements about orders. In other words, we can use C-q.o's as a uniform 
   approach to ordered and valued groups. 

   We proved in \cite{Lehericy2} (Proposition 3.9) that C-q.o's satisfy the condition $(\ast)$ of Lemma \ref{quotientlemma}. 
   Moreover, we know that if 
   $\precsim$ is a C-q.o then a subgroup $H$ is $\precsim$-convex if and only if $H^{\succsim0}$ is 
   $\precsim$-convex. This allows us to apply the lemmas from section \ref{qogroups} to the case of C-q.o's:
\begin{Thm}\label{theoremforCqo}
 Let $\precsim$ be a C-q.o on $G$ and $v:G\to\Gamma\cup\{\infty\}$ a valuation. 
 The following statements are equivalent:
  \begin{enumerate}
   \item $\forall\gamma\in\Gamma,G^{\gamma}$ is $\precsim$-convex.
   \item $\forall\gamma\in\Gamma,G_{\gamma}$ is $\precsim$-convex.
   \item $\forall\gamma\in\Gamma$, $\precsim$ induces a C-q.o $\precsim_{\gamma}$.
   on $G^{\gamma}/G_{\gamma}$.
   \item $v$ is compatible with $\precsim$.
  \end{enumerate}
  Moreover, $\precsim$ is v-type (respectively o-type) if and only if each $\precsim_{\gamma}$ is v-type 
  (respectively o-type).
\end{Thm}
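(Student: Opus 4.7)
The plan is to mimic the proof of the analogous theorem for compatible q.o's in Section 2, substituting the appropriate C-q.o facts for their compatible q.o counterparts. The equivalence $(1)\Leftrightarrow(2)$ is immediate from Lemma 1.1. For $(2)\Leftrightarrow(4)$, I would combine Lemma 1.2 with the fact recalled just above the statement, namely that for a C-q.o a subgroup $H$ is $\precsim$-convex iff $H^{\succsim 0}$ is: Lemma 1.2 characterizes compatibility of $v$ as $\precsim$-convexity of each $(G_\gamma)^{\succsim 0}$, which under the C-q.o hypothesis is exactly $\precsim$-convexity of each $G_\gamma$.

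For $(2)\Leftrightarrow(3)$, I would invoke the other fact recalled just above the theorem, namely that C-q.o's satisfy condition $(\ast)$ (from \cite{Lehericy2}, Proposition 3.9). This lets me apply Lemma \ref{quotientlemma} with $H=G_\gamma$ viewed as a subgroup of $G^\gamma$ (with $\precsim$ restricted): $\precsim$-convexity of $G_\gamma$ is then equivalent to $\precsim$ inducing a q.o $\precsim_\gamma$ on $G^\gamma/G_\gamma$ with $cl(0+G_\gamma)=\{0+G_\gamma\}$. The main obstacle, exactly as in the proof of the compatible analog, is to upgrade this induced q.o to a \emph{C-q.o}, i.e.\ to exhibit a compatible C-relation on $G^\gamma/G_\gamma$ inducing $\precsim_\gamma$. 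The natural construction is to push the compatible C-relation $C$ on $G$ associated with $\precsim$ forward along the projection $G^\gamma\to G^\gamma/G_\gamma$, defining $\bar C(\bar g,\bar h,\bar k)$ to hold whenever some (equivalently all) representatives in $G^\gamma$ satisfy $C$; the $\precsim$-convexity of $G_\gamma$ should ensure independence of representatives, and compatibility of $C$ with the group law should transfer to $\bar C$. Rather than carrying out these verifications here, I would cite the corresponding result from \cite{Lehericy2}, in the spirit of the reference to \cite{Lehericy} used in the proof of the compatible q.o analog.

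For the moreover part, the key observation is that every nonzero $g\in G$ belongs to $G^\gamma\setminus G_\gamma$ for the unique value $\gamma=v(g)$. The second statement of Lemma \ref{quotientlemma}, applied with $H=G_\gamma$ inside $G^\gamma$, then yields that $g$ is v-type (respectively o-type) in $(G,\precsim)$ iff $g+G_\gamma$ is v-type (respectively o-type) in $(G^\gamma/G_\gamma,\precsim_\gamma)$. Quantifying over all nonzero $g$ gives the desired equivalence between $\precsim$ being v-type (resp.\ o-type) and each $\precsim_\gamma$ being so.
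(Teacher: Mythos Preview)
Your proposal is correct and follows essentially the same route as the paper: the equivalences $(1)\Leftrightarrow(2)\Leftrightarrow(4)$ come from Lemmas~1.1 and~1.2 together with the C-q.o fact that $H$ is $\precsim$-convex iff $H^{\succsim 0}$ is, the implication that the induced q.o on $G^\gamma/G_\gamma$ is a C-q.o is deferred to \cite{Lehericy2} (the paper cites the end of the proof of Proposition~3.11 there), and the moreover clause is read off from the second statement of Lemma~\ref{quotientlemma}. Your write-up is simply more explicit than the paper's, which compresses everything into a single sentence plus the external reference.
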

\begin{proof}
 We just have to show that the induced q.o on the quotient $G^{\gamma}/G_{\gamma}$ is a
 C-q.o. For this, we refer to the end of the proof of Proposition 3.11 in \cite{Lehericy2}.
\end{proof}

Unlike compatible q.o's, we proved in \cite{Lehericy2} (Proposition 3.38) that the class of C-q.o's is stable under lifting. This allows us to state 
a Baer-Krull theorem for C-q.o's:

\begin{Thm}[Baer-Krull for C-q.o's]\label{BaerKrullgroups}
 Let $(G,v)$ be a valued group. The map:
 
 $\begin{fonction}
&\{\text{C-q.o}\precsim\mid v\text{ is compatible with }\precsim\}&\longleftrightarrow&
 \{\text{ family of C-q.o's}(\precsim_{\gamma})_{\gamma\in\Gamma}\}\\
&\precsim &\longmapsto & (\precsim_{\gamma})_{\gamma\in\Gamma},\end{fonction}$
 
 where $\precsim_{\gamma}$ denotes the q.o induced by $\precsim$ on the quotient $G^{\gamma}/G_{\gamma}$, is a bijection.
 Moreover, $\precsim$ is v-type (respectively, o-type) if and only if each $\precsim_{\gamma}$ is 
 v-type (respectively o-type).
\end{Thm}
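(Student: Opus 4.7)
My plan is to use Theorem \ref{theoremforCqo} together with the stability of C-q.o's under lifting (Proposition 3.38 of \cite{Lehericy2}, cited in the paragraph preceding the statement) to exhibit an explicit two-sided inverse. First, the forward map is well-defined: the implication $(4)\Rightarrow(3)$ of Theorem \ref{theoremforCqo} says that a $v$-compatible C-q.o $\precsim$ induces a C-q.o $\precsim_\gamma$ on each quotient $G^\gamma/G_\gamma$, so the family $(\precsim_\gamma)_{\gamma\in\Gamma}$ really does belong to the target set.

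For the candidate inverse, given a family $(\precsim_\gamma)_{\gamma\in\Gamma}$ of C-q.o's on the quotients, I would form its lifting $\precsim$ to $G$ as recalled in Section \ref{qogroups}. By the cited Proposition 3.38, this lifting is again a C-q.o, and by the defining property of the lifting it induces $\precsim_\gamma$ on each $G^\gamma/G_\gamma$; condition (3) of Theorem \ref{theoremforCqo} therefore holds, and the implication $(3)\Rightarrow(4)$ yields the $v$-compatibility of $\precsim$. Hence the lifting belongs to the source, and we have a well-defined candidate inverse.

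That the two assignments are mutually inverse is then essentially formal. The round trip family $\to$ lifting $\to$ induced family recovers $(\precsim_\gamma)$ by construction of the lifting. For the reverse round trip, if $\precsim$ is a $v$-compatible C-q.o with induced family $(\precsim_\gamma)$, then both $\precsim$ and the lifting of $(\precsim_\gamma)$ are q.o's on $G$ inducing the same family on every $G^\gamma/G_\gamma$; by the uniqueness property of the lifting stated in Section \ref{qogroups}, they must coincide. The final clause concerning v-type and o-type then falls out immediately from the \emph{Moreover} part of Theorem \ref{theoremforCqo}.

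The genuine technical content of the statement is absorbed into Proposition 3.38 of \cite{Lehericy2}, which guarantees that lifting preserves the C-q.o property; this is the step I would expect to be the main obstacle if one had to prove it from scratch, since it would require a direct verification of the compatible C-relation axioms for the lifted q.o. Granted that citation, what remains is a routine bookkeeping argument combining Theorem \ref{theoremforCqo} with the uniqueness of the lifting.
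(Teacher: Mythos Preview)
Your proposal is correct and follows essentially the same route as the paper's proof: both use Proposition 3.38 of \cite{Lehericy2} for the inverse (lifting) map and Theorem \ref{theoremforCqo} for the forward map, then observe that these operations are mutually inverse. Your write-up is slightly more explicit (invoking the uniqueness of the lifting from Section \ref{qogroups} and the implication $(3)\Rightarrow(4)$ where the paper simply says ``obviously compatible with $v$ by definition of the lifting''), but the argument is the same.
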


\begin{proof}
 We know from Proposition 3.38 of \cite{Lehericy2} that we can lift a family $(\precsim_{\gamma})_{\gamma\in\Gamma}$ of C-q.o's to obtain a C-q.o 
 $\precsim$ on $G$, and $\precsim$ is obviously compatible with $v$ by definition of the lifting. Conversely,
 if $\precsim$ is a C-q.o such that $v$ is compatible with $\precsim$, then by Theorem \ref{theoremforCqo}, we know 
 that $\precsim$ induces a C-q.o $\precsim_{\gamma}$ on each $G^{\gamma}/G_{\gamma}$.
 It is easy to see that these operations are inverse to each other.
\end{proof}

Since valuational q.o's are in particular C-q.o's, we have as an immediate corollary:
\begin{Thm}
 Let $(G,v)$ be a valued group. The map:
 
 $\begin{fonction}&\{\text{valuations }w\mid v\text{ is a coarsening of }w\}&\longleftrightarrow&
 \{\text{ family of valuations }(w_{\gamma})_{\gamma\in\Gamma}\}\\
 & w&\longmapsto&(w_{\gamma})_{\gamma\in\Gamma}, \end{fonction}$

 where  $w_{\gamma}:G^{\gamma}/G_{\gamma}\to v(G^{\gamma}\backslash G_{\gamma})\cup\{\infty\}$ is  defined  by 
$w_{\gamma}(g+G_{\gamma}):=v(g)$ for any $g\in G^{\gamma}\backslash G_{\gamma}$ and $w_{\gamma}(0):=\infty$, is a bijection.
\end{Thm}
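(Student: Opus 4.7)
The plan is to derive this statement as a direct corollary of Theorem \ref{BaerKrullgroups} by restricting the bijection there to the subclass of v-type C-q.o's and then translating back and forth between valuations and valuational q.o's.

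First I would observe that, by the Proposition immediately preceding Theorem \ref{theoremforCqo}, a C-q.o is valuational if and only if every element is v-type; so the assignment $w \mapsto \precsim_w$ sending a valuation to its induced q.o gives a bijection between valuations on $G$ (with prescribed totally ordered value set) and v-type C-q.o's on $G$. Under this correspondence I would unravel the definition of coarsening: by definition $v$ is a coarsening of $w$ exactly when $w(g) \geq w(h)$ implies $v(g) \geq v(h)$ for all $g,h \in G$, and this is precisely the condition that $v$ is compatible with $\precsim_w$ in the sense of Section \ref{qogroups}. Hence the map $w \mapsto \precsim_w$ restricts to a bijection between the left-hand side of the corollary and the set of v-compatible v-type C-q.o's on $G$.

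Next I would invoke the ``moreover'' clause of Theorem \ref{BaerKrullgroups}: restricted to the v-type case, the Baer-Krull bijection for C-q.o's identifies v-compatible v-type C-q.o's on $G$ with families of v-type C-q.o's $(\precsim_\gamma)_{\gamma \in \Gamma}$ on the quotients $G^\gamma/G_\gamma$. Translating each $\precsim_\gamma$ back via the bijection between v-type C-q.o's and valuations yields a family of valuations $(w_\gamma)_{\gamma \in \Gamma}$, and composing the two bijections gives the desired correspondence. The last remaining task is to verify that under this chain of identifications the valuation $w_\gamma$ is the one written in the statement, namely $w_\gamma(g + G_\gamma) := v(g)$ for $g \in G^\gamma \setminus G_\gamma$. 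Here I would use coarsening: for $g \in G^\gamma \setminus G_\gamma$ and $a \in G_\gamma$, one has $v(a) > \gamma = v(g)$, hence $w(a) > w(g)$ (by the contrapositive of the coarsening relation), so $w(g+a) = w(g)$. This shows that the q.o induced by $\precsim_w$ on $G^\gamma/G_\gamma$ reads $g + G_\gamma \precsim h + G_\gamma \iff w(g) \geq w(h)$, which coincides with the q.o induced by the valuation described in the statement (up to a canonical identification of its value set with $v(G^\gamma \setminus G_\gamma)$).

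I expect the substantive content of the argument to be completely absorbed by Theorem \ref{BaerKrullgroups}, so that the only real work here is bookkeeping: pinning down a canonical value set for each $w_\gamma$ so that the correspondence $w \mapsto (w_\gamma)$ is a bijection on the level of valuations (rather than merely valuational q.o's), and checking that the prescription $w_\gamma(g+G_\gamma) = v(g)$ provides such a canonical choice. This is the only step where anything beyond a direct appeal to the previous theorem is needed, and it is handled by the coarsening computation above.
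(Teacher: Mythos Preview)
Your overall approach is exactly the paper's: the paper offers no proof beyond the single sentence ``Since valuational q.o's are in particular C-q.o's, we have as an immediate corollary,'' and you flesh out precisely that derivation via Theorem \ref{BaerKrullgroups} restricted to the v-type case. The bookkeeping you add (identifying valuations with v-type C-q.o's, and checking that ``$v$ is a coarsening of $w$'' translates to ``$v$ is compatible with $\precsim_w$'') is correct and is the natural way to make the corollary explicit.

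There is, however, a genuine slip in your final verification step, and it traces back to what is almost certainly a typo in the printed statement. As written, the theorem defines $w_\gamma(g+G_\gamma):=v(g)$ with value set $v(G^{\gamma}\setminus G_{\gamma})\cup\{\infty\}$. But for $g\in G^{\gamma}\setminus G_{\gamma}$ one has $v(g)=\gamma$, so this $w_\gamma$ is the \emph{trivial} valuation on $G^{\gamma}/G_{\gamma}$, independent of $w$; the map $w\mapsto(w_\gamma)_\gamma$ would then be constant and could not be a bijection. Your own computation correctly produces $w_\gamma(g+G_\gamma)=w(g)$ (via $w(g+a)=w(g)$ for $a\in G_\gamma$), which is the right formula. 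The sentence in which you then claim this ``coincides with the q.o induced by the valuation described in the statement (up to a canonical identification of its value set with $v(G^\gamma\setminus G_\gamma)$)'' is where the argument breaks: there is no such identification in general, because the stated formula collapses everything to a single value $\gamma$ while yours does not. The fix is simply to note that the displayed formula should read $w_\gamma(g+G_\gamma):=w(g)$ with value set $w(G^{\gamma}\setminus G_{\gamma})\cup\{\infty\}$, after which your argument goes through verbatim.
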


Thanks to Lemma \ref{quotientlemma}, we see that if $\precsim$ is o-type and 
    and if $H$ is a $\precsim$-convex subgroup of $G$, then 
   the image under $\Omega$ of the q.o induced on $G/H$ by $\precsim$ is exactly the order induced by 
   $\Omega(\precsim)$ on $G/H$. This leads
   to another corollary of Theorem \ref{BaerKrullgroups}:

\begin{Thm}\label{Thmorders}
 Let $(G,v)$ be a valued group. The map:
 
 $\begin{fonction}
&\{\text{orders }\leq\mid v\text{ is compatible with }\leq\}&\longleftrightarrow&
 \{\text{ family of orders }(\leq_{\gamma})_{\gamma\in\Gamma}\}\\
& \leq&\longmapsto& (\leq_{\gamma})_{\gamma\in\Gamma},\end{fonction}$

where each $\leq_{\gamma}$ is the order induced by $\leq$ on the quotient $G^{\gamma}/G_{\gamma}$, is a bijection.
\end{Thm}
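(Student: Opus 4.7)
The plan is to deduce this statement from Theorem \ref{BaerKrullgroups} by means of the surjection $\Omega$ of Proposition \ref{mapomega}, which is compatible with taking quotients by $\precsim$-convex subgroups (as pointed out just before the statement, via Lemma \ref{quotientlemma}). Well-definedness of the forward map $\leq\mapsto(\leq_{\gamma})_{\gamma}$ is immediate: $v$-compatibility of $\leq$ directly yields that each $G^{\gamma}$ and each $G_{\gamma}$ is $\leq$-convex, so $\leq$ descends to a total order $\leq_{\gamma}$ on each $G^{\gamma}/G_{\gamma}$.

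To construct the inverse, I would start from a family $(\leq_{\gamma})$ and apply to each $\leq_{\gamma}$ the explicit section of $\Omega$ given after Proposition \ref{mapomega}; this produces a family of o-type C-q.o's $(\precsim_{\gamma})$ with $\Omega(\precsim_{\gamma})=\leq_{\gamma}$. By Theorem \ref{BaerKrullgroups}, the family $(\precsim_{\gamma})$ lifts to a unique $v$-compatible C-q.o $\precsim$ on $G$, which is o-type by the ``Moreover'' of that theorem. I would then set $\leq\,:=\Omega(\precsim)$. The compatibility of $\Omega$ with quotients gives that $\leq$ induces $\Omega(\precsim_{\gamma})=\leq_{\gamma}$ on each $G^{\gamma}/G_{\gamma}$, so the composition ``inverse then forward'' is the identity on families.

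To close the loop, I would show that any $v$-compatible order on $G$ is determined by the family of orders it induces on the quotients $G^{\gamma}/G_{\gamma}$. For $g\in G\setminus\{0\}$ with $\gamma:=v(g)$, the $\leq$-convexity of $G_{\gamma}$ and the fact that $g+G_{\gamma}\neq 0$ force the $\leq$-sign of $g$ to coincide with the $\leq_{\gamma}$-sign of $g+G_{\gamma}$; hence the reductions determine $\leq$ completely, so the composition ``forward then inverse'' is also the identity.

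The main technical obstacle I foresee is verifying that $\leq\,=\Omega(\precsim)$ actually inherits $v$-compatibility from $\precsim$: this is not automatic since $v$-compatibility is phrased in terms of the full q.o structure rather than just the positive cone. The verification would rely on a careful use of the structure theorem for o-type C-q.o's from \cite{Lehericy2}, together with the ultrametric inequality $v(g+h)\geq\min(v(g),v(h))$, in order to translate $0\precsim g\precsim h$ into a statement about positivity and valuations of $g$ and $h$ under $\leq$.
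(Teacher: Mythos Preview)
Your approach is essentially the same as the paper's: pick $\precsim_{\gamma}\in\Omega^{-1}(\leq_{\gamma})$, lift via Theorem~\ref{BaerKrullgroups} to an o-type C-q.o $\precsim$, and set $\leq:=\Omega(\precsim)$. The paper's proof stops there, while you go further and explicitly address injectivity and $v$-compatibility of the resulting order; both points are indeed left implicit in the paper.

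Your worry about $v$-compatibility, however, is overstated: you do not need the structure theorem for o-type C-q.o's from \cite{Lehericy2}. Once you have observed (as you do in your injectivity paragraph) that for $g\neq 0$ the $\leq$-sign of $g$ equals the $\leq_{v(g)}$-sign of $g+G_{v(g)}$, the check is elementary. Suppose $0\leq g\leq h$ with $g\neq 0$ and $v(g)<v(h)$; set $\gamma:=v(g)$. Then $h\in G_{\gamma}$, so $v(h-g)=\gamma$ by the ultrametric inequality, and $(h-g)+G_{\gamma}=-g+G_{\gamma}$. From $0\leq g$ and $0\leq h-g$ you get $0\leq_{\gamma}g+G_{\gamma}$ and $0\leq_{\gamma}-g+G_{\gamma}$, forcing $g+G_{\gamma}=0$ since $\leq_{\gamma}$ is an order---a contradiction. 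So $v$-compatibility of $\leq$ follows directly from the fact that each $\leq_{\gamma}$ is an order, without passing back through $\precsim$.
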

 \begin{proof}
   For each $\gamma$, let 
  $\precsim_{\gamma}\in\Omega^{-1}(\leq_{\gamma})$. By Theorem \ref{BaerKrullgroups}, we can lift 
  $(\precsim_{\gamma})_{\gamma}$ to $G$ and obtain a C-q.o $\precsim$. Since each $\precsim_{\gamma}$ is o-type, so is $\precsim$, 
  so we can define $\leq$ as $\Omega(\precsim)$. Then the order induced by $\leq$ on each $G^{\gamma}/G_{\gamma}$ is $\leq_{\gamma}$.
 \end{proof}

 \section{q-sections and the classical Baer-Krull Theorem}
 Finally, we want to show how one can recover the classical Baer-Krull theorem from Theorem \ref{Thmorders}. 
 We fix a valued field $(K,v)$ with value group $(G,\leq)$.
  Note that if $(\leq_g)_{g\in G}$ is a family of group orders 
 on the quotients $K^g/K_g$, then Theorem \ref{Thmorders} only tells us that this family lifts to a group order on $(K,+)$, but 
 there is no reason to think that this lifting is a field order in general. In order to understand the connection between 
 Theorem \ref{Thmorders} and the classical Baer-Krull theorem, we need to characterize the families $(\leq_g)_{g\in G}$ whose
 lifting to $K$ is a field order. 
 
  We can achieve this by using the the notion of q-section  developed in \cite{Prestel}. A q-section of the valued 
 field $(K,v)$ is a map $s:G\to K$ such that $s(0)=1$, $v(s(g))=g$ and 
 $s(g+h)\equiv s(g)s(h)$ mod $K^2$. It was proved in \cite{Prestel} that every valued field admits a q-section. 
 We now fix
 a q-section $s$ of $(K,v)$. Then for any $g\in G$, the map
  %Remember that in the field case, the quotients $K^g/K_g$ are all isomorphic to $Kv:=K^0/K_0$. More precisely, if 
  %$(x_g)_{g\in G}$ is a section of $(K,v)$, then a family $(\phi_g)_{g\in G}$ of isomosphisms is given by: 
  $\phi_g: Kv\to K^g/K_g,a+K_0\mapsto s(g)a+K_g$ defines an isomorphism from $Kv$ to $K^g/K_g$. If we take a
  family $(\leq_g)_{g\in G}$ of orders on the quotients $K^g/K_g$ then the behavior of the $\phi_g$'s with respect to 
  $\leq_g$'s will tell us if the lifting of $(\leq_g)_g$ is a field order:
 
   \begin{Prop}\label{fieldorders}
    Let $(\leq_g)_{g\in G}$ be a family of group orders on the quotients $K^g/K_g$. 
     Then the lifting of $(\leq_g)_g$ to $K$ is a field 
    order if and only if the following conditions are satisfied:
    
      \begin{enumerate}
       \item $\leq_0$ is a field order of $Kv$
       \item there exists a  group homomorphism 
    $\epsilon: G\to\{-1,1\}$ such that for any $g\in G$, $\phi_g$ is order-preserving when $\epsilon(g)=1$ and 
    $\phi_g$ is order-reversing when $\epsilon(g)=-1$.
      \end{enumerate}

   \end{Prop}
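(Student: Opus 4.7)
The plan is to translate signs in $K$ to signs in the residue field $Kv$ via the maps $\phi_g$, using the $q$-section $s$. Concretely, for any $x \in K^\times$ with $v(x) = g$, the element $x/s(g)$ lies in $R_v^\times$, so its residue $\overline{x/s(g)} \in Kv^\times$ is well-defined, and by construction of $\phi_g$ one has $\phi_g(\overline{x/s(g)}) = x + K_g$. Since the lifted order $\leq$ is $v$-compatible, Theorem \ref{theoremforCqo} gives that $K_g$ is $\leq$-convex, hence the sign of $x$ in $(K,\leq)$ agrees with the sign of $x + K_g$ in $(K^g/K_g, \leq_g)$. This key translation reduces the sign of $x$ to the sign of $\overline{x/s(g)}$ in $\leq_0$, twisted by whether $\phi_g$ is order-preserving or order-reversing.

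For the forward direction, assume $\leq$ is a field order. Statement (1) is the standard fact that a $v$-compatible field order on $K$ descends to a field order on $Kv$: any positive residue has a positive representative in $R_v$, and the positive elements of $R_v$ are closed under multiplication. For (2), I would set $\epsilon(g) := \operatorname{sign}(s(g))$ and check that $\phi_g$ is order-preserving iff $\epsilon(g) = 1$. Indeed, for $a \in Kv$ positive with a positive lift $r \in R_v \setminus I_v$, the sign of $\phi_g(a) = s(g) r + K_g$ in $\leq_g$ equals that of $s(g) r$ in $(K,\leq)$, which is $\epsilon(g) \cdot \operatorname{sign}(r) = \epsilon(g)$. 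The homomorphism property of $\epsilon$ then follows from the $q$-section identity: writing $s(g+h) = s(g) s(h) t^2$ for some $t \in K^\times$, the fact that $t^2 > 0$ gives $\epsilon(g+h) = \operatorname{sign}(s(g))\operatorname{sign}(s(h)) = \epsilon(g)\epsilon(h)$.

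For the backward direction, assume (1) and (2). Theorem \ref{Thmorders} already ensures that the lifted $\leq$ is a group order on $K$ compatible with $v$, so only multiplicativity of signs in $(K,\leq)$ remains to be checked. For $x, y \in K^\times$ with $v(x) = g$ and $v(y) = h$, I write $s(g+h) = s(g) s(h) t^2$ with $v(t) = 0$. Then in $Kv$ one has $\overline{xy/s(g+h)} = \overline{x/s(g)} \cdot \overline{y/s(h)} \cdot \overline{t}^{\,-2}$, and since $\leq_0$ is a field order, $\overline{t}^{\,2}$ is positive in $\leq_0$. Combining this with the multiplicativity of signs in $\leq_0$, the translation formula, and the homomorphism property of $\epsilon$ yields $\operatorname{sign}(xy) = \operatorname{sign}(x) \cdot \operatorname{sign}(y)$, so $\leq$ is a field order. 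The main technical obstacle is the initial translation step: rigorously justifying that $\leq$-convexity of $K_g$ forces the sign of $x \notin K_g$ in $\leq$ to agree with the sign of $x + K_g$ in $\leq_g$; once this is in hand, both directions reduce to straightforward sign bookkeeping.
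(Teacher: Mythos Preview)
Your proof is correct and follows essentially the same approach as the paper: in both directions you set $\epsilon(g)=\operatorname{sign}(s(g))$ and exploit the $q$-section identity $s(g+h)\equiv s(g)s(h)\bmod K^2$ to control multiplicativity, exactly as the paper does (the paper phrases the backward computation as $ab+K_{g+h}=d^2\phi_{g+h}(\phi_g^{-1}(a+K_g)\phi_h^{-1}(b+K_h))$, which is your residue identity pushed forward through $\phi_{g+h}$). Your one misstep is calling the sign-agreement between $x$ and $x+K_g$ the ``main technical obstacle'': since $\leq$ is \emph{by definition} the lifting of the family $(\leq_g)_g$, the equivalence $0\leq x \Leftrightarrow 0+K_g\leq_g x+K_g$ (with $g=v(x)$) is immediate from that definition and requires no appeal to convexity or to Theorem~\ref{theoremforCqo}.
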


   \begin{proof}
  
    Denote by $\leq$ the lifting. If $\leq$ is a field order, then $\leq_0$ must be a field order because it is the order
    induced by $\leq$ on $Kv$; moreover, we can define 
    $\epsilon(g)=1$ if $0<s(g)$ and $\epsilon(g)=-1$ if $s(g)<0$, and one easily sees that
    $\epsilon$ has the desired property.
    Assume now that conditions (1) and (2) are satisfied.
    We already know from Theorem \ref{Thmorders} that $\leq$ is a group order of $(K,+)$, so $\leq$ 
    is a field order if and only if
     the set of positive 
    elements of $(K,\leq)$ is stable under multiplication. By definition of $\leq$, this is equivalent to saying that 
    for any $a,b\in K$ with $g=v(a)$ and $h=v(b)$, $0\leq_ga+K_g$ and $0\leq_hb+K_h$ imply 
    $0\leq_{g+h}ab+K_{g+h}$. Note that since $s$ is a q-section, we have 
    $ab+K_{g+h}=d^2\phi_{g+h}(\phi_g^{-1}(a+K_g)\phi_h^{-1}(b+K_h))$ for some $d\in K$; in particular, 
    $ab+K_{g+h}$ has the same sign as $\phi_{g+h}(\phi_g^{-1}(a+K_g)\phi_h^{-1}(b+K_h))$. Now assume for example that $\phi_g$ is 
    order-preserving and $\phi_h$ order-reversing.  If $a+K_g$ and $b+K_h$ are both positive,we then have 
    $\phi_g^{-1}(a+K_g)\leq_00$ and $0\leq_0\phi_h^{-1}(b+K_h)$. Since 
    $\leq_0$ is a field order, this implies that $\phi_g^{-1}(a+K_g)\phi_h^{-1}(b+K_h)\leq_00$.    
    Since $\epsilon$ is a group homomorphism, then 
    $\phi_{g+h}$ is order-reversing, hence 
    $0\leq_{g+h}\phi_{g+h}(\phi_g^{-1}(a+K_g)\phi_h^{-1}(b+K_h))$. The other cases are treated similarly.
        
   \end{proof}

  As a consequence of proposition \ref{fieldorders} we have the following variant of the Baer-Krull theorem:
   \begin{Thm}[Baer-Krull, variant]\label{Baerkrullvariant}
    Let  $\mathcal{O}$ be the set of field orders of $Kv$ and $\mathcal{E}$ the set of group homomorphisms from 
    $G$ to $\{-1,1\}$. Then $\mathcal{O}\times\mathcal{E}$ is in bijection with the set of families 
    $(\leq_g)_{g\in G}$ whose lifting to $K$ is a field order.
   \end{Thm}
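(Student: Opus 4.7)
The plan is to build an explicit bijection
\[
\Psi:\mathcal{O}\times\mathcal{E}\longrightarrow\{(\leq_g)_{g\in G}\mid\text{the lifting of }(\leq_g)_g\text{ to }K\text{ is a field order}\}
\]
by transporting the field order $\leq_0$ on $Kv$ along the isomorphisms $\phi_g$, with the direction of transport dictated by $\epsilon(g)$. Proposition \ref{fieldorders} does essentially all the work: it both characterises which families arise as lifts of field orders, and it produces the datum $(\leq_0,\epsilon)$ from any such family.

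Given $(\leq_0,\epsilon)\in\mathcal{O}\times\mathcal{E}$, I would define, for each $g\in G$, an order $\leq_g$ on $K^g/K_g$ by
\[
\phi_g(x)\leq_g\phi_g(y)\ \Longleftrightarrow\ \begin{cases}x\leq_0 y&\text{if }\epsilon(g)=1,\\ y\leq_0 x&\text{if }\epsilon(g)=-1,\end{cases}
\]
for $x,y\in Kv$. Since $\phi_g$ is a group isomorphism and $\leq_0$ is a group order, each $\leq_g$ is a group order on $K^g/K_g$; by construction $\phi_g$ is order-preserving precisely when $\epsilon(g)=1$ and order-reversing otherwise, so both conditions of Proposition \ref{fieldorders} are met. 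Hence the lifting of $(\leq_g)_g$ to $K$ is a field order, and one sets $\Psi(\leq_0,\epsilon):=(\leq_g)_g$.

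For the inverse, let $(\leq_g)_g$ be a family whose lifting to $K$ is a field order. Proposition \ref{fieldorders} furnishes a field order $\leq_0$ on $Kv$ (namely the order induced on the residue field) together with a homomorphism $\epsilon:G\to\{-1,1\}$ describing the behaviour of the $\phi_g$. One then assigns $(\leq_g)_g\mapsto(\leq_0,\epsilon)$. The displayed formula above shows that conversely the whole family $(\leq_g)_g$ is reconstructed from $(\leq_0,\epsilon)$, so the two maps are mutual inverses.

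The only genuinely non-formal point, and hence the main (mild) obstacle, is to check that $\epsilon$ is uniquely determined by $(\leq_g)_g$ so that the inverse assignment is well-defined. This is immediate from the fact that $Kv$ is a nontrivial ordered field, so $0<_0 1$: for each $g$ the map $\phi_g$ cannot be simultaneously order-preserving and order-reversing, so the value $\epsilon(g)\in\{-1,1\}$ is forced by $\leq_g$ and $\leq_0$. Everything else is bookkeeping on top of Proposition \ref{fieldorders}.
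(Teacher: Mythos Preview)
Your proposal is correct and follows essentially the same approach as the paper: define $\leq_g$ by transporting $\leq_0$ through $\phi_g$ with sign $\epsilon(g)$, then invoke Proposition~\ref{fieldorders}. You are in fact more thorough than the paper's proof, which only writes down the map $\mathcal{O}\times\mathcal{E}\to\{(\leq_g)_g\}$ and leaves the verification of bijectivity (in particular the uniqueness of $\epsilon$, which you correctly isolate) to the reader.
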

    \begin{proof}
     Assume that $\leq_0\in\mathcal{O}$ and $\epsilon\in\mathcal{E}$ are given.
     Then define $\leq_g$ on $K^g/K_g$ as follows:
      If $\epsilon(g)=1$, define $a+K_g\leq_g b+K_g\Leftrightarrow \phi_g^{-1}(a+K_g)\leq_0\phi_g^{-1}(b+K_g)$;
      if $\epsilon(g)=-1$, define $a+K_g\leq_g b+K_g\Leftrightarrow \phi_g^{-1}(a+K_g)\geq_0\phi_g^{-1}(b+K_g)$.                            
      By the previous proposition, this family of orders lifts to a field order on $K$.
    \end{proof}

    Now assume that $(\pi_i)_{i\in I}$ is a family of elements of $K$ such that 
    $(v(\pi_i)+2G)_{i\in I}$ is an $\F_2$-Basis of $G/2G$. In order to recover Theorem \ref{baerkrull} from Theorem
    \ref{Baerkrullvariant}, we need to show that the set of homomorphisms from $G$ to $\{-1,1\}$ is in bijection 
    with the set of maps from $I$ to $\{-1,1\}$. First note that we can see $I$ as a subset of $G$ if we identify 
    $i\in I$ with $v(\pi_i)$. Thus, any homomorphism $\epsilon:G\to\{-1,1\}$ canonically induces a map 
    $I\to \{-1,1\}$ (just take $\epsilon_{\mid I}$). For the converse, note that every $g\in G$ has a decomposition 
    $g=\sum_{i\in I}n_iv(\pi_i)+2h$, where $h\in G$, $n_i\in\{0,1\}$ and $n_i=1$ only for finitely many $i$. If 
    $\epsilon:I\to\{-1,1\}$ is given, we can extend $\epsilon$ to a homomorphism $G\to\{-1,1\}$ as follows: 
    declare that $\epsilon(g)=1$ if the number of $i\in I$ such that $n_i=1$ in the decomposition of $g$ is even, and declare
    $\epsilon(g)=-1$ if it is odd.

{\small FACHBEREICH MATHEMATIK UND STATISTIK, 

UNIVERSITÄT KONSTANZ,

78457, GERMANY.

\emph{Email address:} salma.kuhlmann@uni-konstanz.de}\\

{\small FACHBEREICH MATHEMATIK UND STATISTIK, 

UNIVERSITÄT KONSTANZ,

78457, GERMANY.

\emph{Email address:} gabriel.lehericy@uni-konstanz.de}

\end{document}